\documentclass{amsproc}

\usepackage{amssymb}
\usepackage{graphicx}
\usepackage{amscd}
\usepackage{amsmath}
\setcounter{MaxMatrixCols}{10}

\theoremstyle{plain}

\newtheorem{definition}{Definition}

\newtheorem{lemma}{Lemma}

\newtheorem{proposition}{Proposition}
\newtheorem{remark}{Remark}

\newtheorem{theorem}{Theorem}

\numberwithin{equation}{section}

\begin{document}

\title[On the Weyl function for complex Jacobi matrices]{On the Weyl function for complex Jacobi matrices}

\author{ A. S. Mikhaylov}
\address{St. Petersburg   Department   of   V.A. Steklov    Institute   of   Mathematics
of   the   Russian   Academy   of   Sciences, 7, Fontanka, 191023
St. Petersburg, Russia and Saint Petersburg State University,
St.Petersburg State University, 7/9 Universitetskaya nab., St.
Petersburg, 199034 Russia.} \email{mikhaylov@pdmi.ras.ru}

\author{ V. S. Mikhaylov}
\address{St.Petersburg   Department   of   V.A.Steklov    Institute   of   Mathematics
of   the   Russian   Academy   of   Sciences, 7, Fontanka, 191023
St. Petersburg, Russia} \email{ftvsm78@gmail.com}

\keywords{complex Jacobi matrices, Weyl function}
\date{September, 2025}

\maketitle






\noindent {\bf Abstract.} We derive a new representation for the
Weyl function associated with the complex Jacobi matrix in the
finite and semi-infinite cases. In our approach we exploit
connections to the discrete-time dynamical system associated with
these matrices.

\section{Introduction.}

For a given sequence of complex numbers $\{a_1,a_2,\ldots\}$,
$\{b_1, b_2,\ldots  \}$, $a_i\not= 0$, such that $\sup_{n\geqslant
1}\{|a_n|,|b_n|\}\leqslant B$ for some $B>0$, we set
\begin{equation} \label{Jac_matr}
A=\begin{pmatrix} b_1 & a_1 & 0 & 0 & 0 &\ldots \\
a_1 & b_2 & a_2 & 0 & 0 &\ldots \\
0 & a_2 & b_3 & a_3 & 0 & \ldots \\
\ldots &\ldots  &\ldots &\ldots & \ldots &\ldots
\end{pmatrix}.
\end{equation}
For $N\in \mathbb{N}$, by $A^N$ we denote the $N\times N$ Jacobi
matrix which is a block of (\ref{Jac_matr}) consisting of the
intersection of first $N$ columns with first $N$ rows of $A$.

We consider the operator $H$ corresponding to the semi-infinite
Jacobi matrix $A$, defined on $l^2(\mathbb N)\ni \psi=(\psi_1,
\psi_2, \ldots)$, given by the formula
\begin{equation}
\label{Oper_def}
\begin{cases}
(Hu)_n&=a_{n-1}\psi_{n-1}+b_n\psi_n+a_n\psi_{n+1},\quad
n\geqslant 2,\\
(Hu)_1&=b_1\psi_1+a_1\psi_2,\quad n=1 .
\end{cases}
\end{equation}
With $A^N$ we associate the operator $H^N$. The Weyl $m-$functions
for $H$ and $H^N$ are defined (see \cite{BB}) by
\begin{eqnarray}
m(\lambda)=\left( (H-\lambda)^{-1}e_1,e_1\right),\label{Weyl_def1}\\
m^N(\lambda)=\left((H^N-\lambda)^{-1}e_1,e_1\right),
\label{Weyl_def2}
\end{eqnarray}
$e_n=(0,,\ldots,1,0,\ldots)^T$ with $1$ on the $n-$th place.

Note that in the case of a real matrix $A$, according to the
spectral theorem, this definition is equivalent to the following:
\begin{equation}
\label{Weyl_real}
m(\lambda)=\int_{\mathbb{R}}\frac{d\rho(z)}{z-\lambda},
\end{equation}
where $d\rho=d\left(E_He_1,e_1\right)$ and $E_H$ is the
(projection-valued) spectral measure of the self-adjoint operator
$H$.

Associated with the matrix $A$ and additional parameter
$a_0\not=0$ is a dynamical system with discrete time:
\begin{equation}
\label{Jacobi_dyn}
\begin{cases}
u_{n,t+1}+u_{n,t-1}-a_{n}u_{n+1,t}-a_{n-1}u_{n-1,t}-b_nu_{n,t}=0,\quad n,t\in \mathbb{N},\\
u_{n,-1}=u_{n,0}=0,\quad n\in \mathbb{N}, \\
u_{0,t}=f_t,\quad t\in \mathbb{N}\cup\{0\},
\end{cases}
\end{equation}
which is a natural analog of a dynamical systems governed by a
wave equation on a semi-axis \cite{AM,BM_1}. By an analogy with
continuous problems \cite{B07}, we treat the complex sequence
$f=(f_0,f_1,\ldots)$ as a \emph{boundary control}. The solution to
(\ref{Jacobi_dyn}) is denoted by $u^f_{n,t}$. We consider also the
dynamical system associated with the finite matrix $A_N$:
\begin{equation}
\label{Jacobi_dyn_int}
\begin{cases}
v_{n,t+1}+v_{n,t-1}-a_nv_{n+1,t}-a_{n-1}v_{n-1,t}-b_nv_{n,t}=0,\,\, t\in \mathbb{N}_0,\,\, n\in 1,\ldots, N,\\
v_{n,\,-1}=v_{n,\,0}=0,\quad n=1,2,\ldots,N+1, \\
v_{0,\,t}=f_t,\quad v_{N+1,\,t}=0,\quad t\in \mathbb{N}\cup\{0\},
\end{cases}
\end{equation}
which is a natural analog of a dynamical systems governed by a
wave equation on an interval, the solution to
(\ref{Jacobi_dyn_int}) is denoted by $v^f$.

Having fixed $T\in \mathbb{N}$, we associate the \emph{response
operators} with (\ref{Jacobi_dyn}), (\ref{Jacobi_dyn_int}) which
act according to the rules:
\begin{eqnarray}
\left(R^T f\right)_t:=u^f_{1,t},\quad t=1,\ldots, T,\label{Resp_op}\\
\left(R^T_N f\right)_t:=v^f_{1,t},\quad t=1,\ldots, T.
\end{eqnarray}

In the second section we recall some results on complex Jacobi
matrices in our case in accordance with \cite{BB}. We also provide
some results from \cite{SG} on the representation of resolvent
$(H-\lambda I)^{-1}$.

In the third section we derive the special "spectral
representation" to the solution of (\ref{Jacobi_dyn_int}).

In the final section, we  we introduce the discrete Fourier
transform from \cite{MMS}  and derive representations for the Weyl
functions for $H$ and $H^N$.

\section{Operators associated with complex Jacobi matrices and their Weyl functions. }

With the matrix $A$ we can associate operator $H$ acting in the
linear space of vectors from $l_2$ with a finite number of nonzero
elements, using the usual matrix product.

The \emph{maximal operator} $H_{max}$ associated with the matrix
$A$ is defined on the domain
\begin{equation*}
D(H_{max}):=\left\{y\in l_2\,|\, Ay\in l_2\right\}.
\end{equation*}
Another operator associated with $A$ is a \emph{minimal operator}
$H_{min}$, which by definition is a smallest closed extension of
$A$.
\begin{definition}
The matrix $A$ is called \emph{proper} if the operators $H_{max}$
and $H_{min}$ coincide.
\end{definition}

We consider two solution to the difference equation
\begin{equation}
\label{Dif_eq} H\phi=z\psi,
\end{equation}
with initialization
\begin{equation*}
q_0(z)=1,\quad q_{1}(z)=0,\quad p_0(z)=0,\quad p_{1}(z)=1.
\end{equation*}

\begin{definition}
The complex Jacobi matrix is called determinate if at least one of
the sequences $\left\{p_k(0)\right\}$, $\left\{q_k(0)\right\}$ is
not an element of $l_2$.
\end{definition}

In the case of finite Jacobi matrix by $\phi^+(z)$ we define the
solution of (\ref{Dif_eq}) with Cauchy data "at the right end of
the interval"
\begin{equation*}
\phi^+_{N+1}=0,\quad \phi^+_{N}=1.
\end{equation*}
For the case of semi-infinite $A$ we know \cite[Section 2.1]{BB}
that the bounded matrix is proper. Moreover, by \cite[Theorem
2.6]{BB}, we have that the proper matrix is determinate and by
\cite[Theorem 2.8]{BB} for $z\in \mathbb{C}\backslash
\sigma_{ess}(H)$ there exists $l_2$ solution to the equation
(\ref{Dif_eq}), which we also denote by $\phi^+(z)$. Note that
since our matrix is bounded, $\sigma_{ess}(H)$ is inside the ball
of radius $B$.

For two solution $u, v$ of (\ref{Dif_eq}) we introduce the
\emph{Wronskian} $W(u,b)$ by the rule
\begin{equation*}
W(u,v)[n]:=a_n\left(u_nv_{n+1}-u_{n+1}v_n\right).
\end{equation*}
Simple observation (as in the real case) is that $W$ does not
depends on $n$. Indeed, for two solutions
\begin{gather*}
a_{n-1} u_{n-1}+b_n u_n+a_n u_{n+1}=zu_n,\\
a_{n-1} v_{n-1}+b_n v_n+a_n v_{n+1}=zv_n,
\end{gather*}
we multiply the first line by $v_n$ and the second line by $u_n$
and subtract second from the first, to get
\begin{equation*}
a_{n-1}(u_{n-1}v_n-v_{n-1}u_n)=a_n(u_{n}v_{n+1}-v_{n}u_{n+1}),
\end{equation*}
thus we have that
\begin{equation*}
W(u,v):=W(u,v)[n],\quad n=1,2,\ldots.
\end{equation*}
In what follows we need to define $\phi^+_0$, to do so we formally
set $a_0=1$, then $\phi^+_0$ can be determined (cf. second line in
(\ref{Oper_def})) from the equality
\begin{equation*}
a_0\phi^+_0+b_1\phi^+_1+a_1\phi^+_2=z\phi^+_1.
\end{equation*}
As such we have that $W(u,v)=W(u,v)[0]$.

The following lemma is a complex analog of \cite[Propositions 2.2,
2.3]{SG}.
\begin{lemma}
The Weyl functions for $H$ and $H^N$ have the representation
\begin{equation}
\label{m_funk} m(z)=-\frac{\phi^+_1(z)}{\phi^+_0(z)}.
\end{equation}
\end{lemma}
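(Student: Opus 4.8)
The plan is to compute the resolvent matrix element $m(z)=\left((H-z)^{-1}e_1,e_1\right)$ directly in terms of the solutions of the difference equation~(\ref{Dif_eq}), using the Wronskian machinery already developed above. First I would observe that by definition, the first component $w_1$ of $w=(H-z)^{-1}e_1$ equals $m(z)$, so it suffices to solve $(H-z)w=e_1$ and read off $w_1$. The standard approach is to build the resolvent kernel from two distinguished solutions of the homogeneous equation $(H-z)u=0$: on one hand the solution $\phi^-$ satisfying the ``boundary condition at the left end'' (the regular solution generated by the initialization $q_0,q_1,p_0,p_1$), and on the other hand the $l_2$-solution $\phi^+(z)$ that exists for $z\in\mathbb{C}\setminus\sigma_{\mathrm{ess}}(H)$ by \cite[Theorem 2.8]{BB}.

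Second, I would write the Green's function in the usual product form
\begin{equation*}
G_{nm}(z)=\frac{1}{W(\phi^-,\phi^+)}\,\phi^-_{\min(n,m)}\,\phi^+_{\max(n,m)},
\end{equation*}
and verify that this indeed inverts $H-z$: the interior three-term recurrence is satisfied away from the diagonal because each factor solves~(\ref{Dif_eq}), the left boundary condition at $n=1$ is enforced by the choice of $\phi^-$, the $l_2$ decay at infinity is enforced by $\phi^+$, and the unit jump across the diagonal is exactly controlled by the constant Wronskian $W(\phi^-,\phi^+)$, which is nonzero precisely because $\phi^+$ is not proportional to $\phi^-$ off the spectrum. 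Here I would use the fact, established above, that the Wronskian is independent of $n$, so it may be evaluated at the convenient index $n=0$ via the formal convention $a_0=1$ and the defining relation for $\phi^+_0$.

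Third, specializing to $n=m=1$ gives
\begin{equation*}
m(z)=G_{11}(z)=\frac{\phi^-_1\,\phi^+_1}{W(\phi^-,\phi^+)}.
\end{equation*}
The regular solution $\phi^-$ is the one with $\phi^-_0=1$, $\phi^-_1=0$ in the appropriate normalization (this is the sequence $\{q_k\}$ from the initialization, suitably indexed so that the left boundary condition of $H$ holds). With this normalization $\phi^-_1$ and the Wronskian simplify: evaluating $W(\phi^-,\phi^+)=W(\phi^-,\phi^+)[0]=a_0(\phi^-_0\phi^+_1-\phi^-_1\phi^+_0)$ with $a_0=1$, $\phi^-_0=1$, $\phi^-_1=0$ collapses the numerator and denominator so that the answer reduces to $-\phi^+_1(z)/\phi^+_0(z)$, which is~(\ref{m_funk}). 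The same computation, using the finite-interval solution $\phi^+$ defined by $\phi^+_{N+1}=0$, $\phi^+_N=1$, yields the representation for $m^N$.

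The main obstacle I anticipate is not the algebra but the careful bookkeeping at the left boundary, where the operator $H$ acts by a two-term rather than three-term rule (the $n=1$ line of~(\ref{Oper_def})). One must check that the regular solution $\phi^-$ with the chosen Cauchy data genuinely encodes the left boundary condition of $H$, and that the formal device $a_0=1$ together with the definition of $\phi^+_0$ makes the jump condition at $n=1$ come out with the correct sign and magnitude. Getting this boundary accounting exactly right is what pins down the sign and the specific ratio $-\phi^+_1/\phi^+_0$ rather than some other normalization; everything else is the standard Green's-function construction transcribed to the discrete, complex, non-self-adjoint setting, where one must be mindful that $(\,\cdot\,,\,\cdot\,)$ is the bilinear (not sesquilinear) pairing so that the Wronskian identity and the resolvent representation from \cite{SG} apply verbatim.
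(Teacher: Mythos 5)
Your strategy coincides with the paper's proof: build the resolvent kernel in product form $G_{m,n}=W^{-1}\phi^-_{\min\{m,n\}}\phi^+_{\max\{m,n\}}$ from a left-regular solution and the right/$l_2$ solution $\phi^+$, use constancy of the Wronskian evaluated at $n=0$ with the convention $a_0=1$, and read off $m(z)=G_{1,1}(z)$. However, your third step contains a concrete error that breaks the computation: the left-regular solution must be the one with Cauchy data $p_0=0$, $p_1=1$, not, as you chose, $\phi^-_0=1$, $\phi^-_1=0$ (the latter is the sequence $\{q_k\}$). The left boundary condition encoded in the second line of (\ref{Oper_def}) is exactly the extended three-term recurrence with $u_0=0$: for any solution $u$ of the extended equation one has $b_1u_1+a_1u_2-zu_1=-a_0u_0$, which vanishes for $p$ but equals $-1$ for $q$. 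So $q$ does not satisfy $\left((H-z)q\right)_1=0$ and cannot serve as the left factor of the Green's function — despite your parenthetical claim that it is "suitably indexed so that the left boundary condition of $H$ holds."

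Moreover, with your data the final algebra does not collapse to (\ref{m_funk}); it collapses to zero: the numerator is $\phi^-_1\phi^+_1=0\cdot\phi^+_1=0$, while $W(\phi^-,\phi^+)=a_0\left(\phi^-_0\phi^+_1-\phi^-_1\phi^+_0\right)=\phi^+_1$, so your formula yields $m(z)\equiv 0$, which is false. The correct bookkeeping (as in the paper) takes $\phi^-=p$: then
$G_{1,1}(z)=\dfrac{p_1(z)\phi^+_1(z)}{a_0\left(p_0(z)\phi^+_1(z)-p_1(z)\phi^+_0(z)\right)}=-\dfrac{\phi^+_1(z)}{\phi^+_0(z)}$,
and the same computation with the finite-case solution $\phi^+$ (data $\phi^+_{N+1}=0$, $\phi^+_N=1$) gives the representation for $m^N$. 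Everything else in your write-up — the jump condition controlled by the constant Wronskian, its nonvanishing off the spectrum, the $l_2$ condition at infinity versus the Dirichlet condition at $n=N+1$, and the caution that the pairing is bilinear rather than sesquilinear in the complex non-self-adjoint setting — is sound and matches the paper's argument; only the identification of the regular solution needs to be fixed, after which the sign and the ratio in (\ref{m_funk}) come out correctly.
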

\begin{proof}
The Green function (the kernel of resolvent) is defined by
\begin{equation*}
G_{m,n}(z)=\left((H-z)^{-1}e_m,e_n\right).
\end{equation*}
Then it is straightforward to check that
\begin{equation*}
G_{m,n}(z)=\frac{1}{W(p,\phi^+)}p_{\operatorname{min}\{m,n\}}(z)\phi^+_{\operatorname{max}\{m,n\}},
\end{equation*}
where the choice of $p,$ $\phi^+$ guarantees the equation holds at
$n=1$ and $n=N$ in finite case and $n=1$ and that
$\left\{\sum_nG_{m,n}(z)f_n\right\}_{m=1}^\infty\in l_2$ for any
$f\in l_2$ with finite number of nonzero elements.

Then by the definition (\ref{Weyl_def1}), (\ref{Weyl_def2}),
choice of $a_0=1$ and conditions on $p$ at $n=0,1$, we have that
in finite and semi-infinite cases
\begin{equation*}
m(z)=G_{1,1}(z)=\frac{p_1(z)\phi^+_1(z)}{a_0((p_{0}(z)\phi^+_{1}(z)-p_{1}(z)\phi^+_{0}(z)))}=-\frac{\phi^+_1(z)}{\phi^+_0(z)},
\end{equation*}
which completes the proof.
\end{proof}

\section{Special representation of the solution to (\ref{Jacobi_dyn_int})}

Here we outline the derivation of the special representation of
the the solution to (\ref{Jacobi_dyn_int}) according to
\cite{MM6}. First we need the Autonne-Takagi \cite{W}
factorization:
\begin{theorem}
Let $H\in \mathbb{C}^{n\times n}$ be a complex symmetric matrix:
$H^*=\overline H$, then there exists a unitary matrix $U$ such
that
\begin{equation}
\label{AutTak}
UHU^T=D=\begin{pmatrix} \hat{d}_1& 0&\ldots &0\\
0& \hat{d}_2&\ldots &0\\
 \ldots& \ldots&\ldots &\ldots\\
 0& 0&\ldots &\hat{d}_n
\end{pmatrix},
\end{equation}
where $\hat{d}_i\geqslant 0,$ $i=1\ldots,n$.
\end{theorem}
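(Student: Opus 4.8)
The plan is to prove the factorization by induction on the size $n$, using a variational characterization of the ``Takagi vectors'' that plays the role of eigenvectors. The base case $n=1$ is immediate: for $H=(h)$ one picks a unit phase $u$ with $u^2h=|h|$. For the inductive step, the key object is the real-valued function $f(x)=|x^THx|$ defined on the unit sphere $\{x\in\mathbb{C}^n:x^*x=1\}$ (note the transpose, not the conjugate transpose, in the quadratic form). By compactness $f$ attains its maximum at some unit vector $v$, and after multiplying $v$ by a suitable unimodular scalar $e^{i\phi}$ --- which multiplies $v^THv$ by $e^{2i\phi}$ --- I may assume $d_1:=v^THv=|v^THv|\geqslant 0$ is real and nonnegative.

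The heart of the argument is the claim that this maximizer satisfies $Hv=d_1\overline{v}$, the complex-symmetric analogue of the eigenvalue equation. To establish it I would compute the first variation of $f$ at $v$: for $w$ with $w^*v=0$ and small complex $\epsilon$, expand along the curve $x(\epsilon)=(v+\epsilon w)/\|v+\epsilon w\|$. Using the symmetry $v^THw=w^THv$ and the Hermitian orthogonality $w^*v=0$ (which kills the first-order change of the norm), one finds $f(x(\epsilon))^2=d_1^2+4d_1\,\mathrm{Re}(\epsilon\,w^THv)+O(|\epsilon|^2)$. If $H\neq 0$ then $d_1>0$, so stationarity forces $w^THv=0$ for every $w$ with $w^*v=0$; rewriting $w^THv=\langle\overline{w},Hv\rangle$ shows that $Hv$ is Hermitian-orthogonal to $(\overline{v})^{\perp}$, hence $Hv=\mu\overline{v}$, and pairing with $v$ gives $\mu=v^THv=d_1$. (The remaining possibility $d_1=0$ means $f\equiv 0$, which polarizes to $H=0$ and is trivial.)

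The deflation step is then routine. Choosing any unitary $U_1$ whose first column is $v$ and setting $H'=U_1^THU_1$, the relation $Hv=d_1\overline{v}$ together with $U_1^*v=e_1$ gives that the first column of $H'$ equals $d_1e_1$; since $H'$ is again complex symmetric, its first row is $d_1e_1^T$ as well, so $H'$ is block-diagonal with a scalar block $d_1$ and a symmetric $(n-1)\times(n-1)$ block $H''$. Applying the inductive hypothesis to $H''$ to get a unitary $U_2$ and assembling the unitary factors as $U=\mathrm{diag}(1,U_2)\,U_1^T$ yields the desired identity $UHU^T=D$ with nonnegative diagonal entries.

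I expect the main obstacle to be precisely the variational computation establishing $Hv=d_1\overline{v}$: one must differentiate $|x^THx|$ correctly with respect to the complex parameter $\epsilon$, keep track of the bilinear (rather than sesquilinear) pairing, and translate the resulting orthogonality condition --- which naturally involves $\overline{w}$ --- back into a statement about $Hv$ itself. Everything else (the phase normalization, the deflation, and the bookkeeping of unitary factors) is standard linear algebra once this claim is in hand.
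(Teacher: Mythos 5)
Your proof is correct. Note, however, that the paper does not prove this theorem at all: it quotes the Autonne--Takagi factorization as a known result, citing only the reference \cite{W}, so there is no in-paper argument to compare yours against. What you have written is the standard variational proof (the one in Horn and Johnson's \emph{Matrix Analysis}): maximize $|x^THx|$ over the unit sphere, show the maximizer $v$ satisfies the conjugate-eigenvalue equation $Hv=d_1\overline{v}$ via a first-variation argument, then deflate and induct. All the delicate points check out: the phase normalization $e^{2i\phi}v^THv$, the cancellation of the first-order norm change from $w^*v=0$, the expansion $f(x(\epsilon))^2=d_1^2+4d_1\,\mathrm{Re}(\epsilon\,w^THv)+O(|\epsilon|^2)$, the separate treatment of $d_1=0$ via polarization (which for a \emph{symmetric} bilinear form over $\mathbb{C}$ indeed forces $H=0$), and the bookkeeping $U=\mathrm{diag}(1,U_2)\,U_1^T$, using that $U_1^T\overline{v}=\overline{U_1^*v}=e_1$ and that $U_1^THU_1$ is again symmetric. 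One cosmetic slip: $w^THv$ equals $\langle Hv,\overline{w}\rangle$, i.e.\ $\overline{\langle \overline{w},Hv\rangle}$, not $\langle\overline{w},Hv\rangle$ itself; since only the vanishing of this quantity is used, the conclusion $Hv\perp(\overline{v})^{\perp}$ is unaffected. A common alternative route, had you wanted one, is to extract $U$ from the singular value decomposition $H=V\Sigma W^*$ together with the symmetry relation (or from a unitary diagonalization of $\overline{H}H$); your argument buys self-containedness at the cost of the variational computation you correctly identified as the crux. For what it is worth, the paper then applies the theorem to $A^N$ in a sharpened form (distinct nonzero $d_i$, citing \cite{MM6}), which does not follow from the factorization alone and is not something your proof was expected to address.
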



We use this theorem for the matrix $A^N$: in \cite{MM6} it is
shown that one can choose unitary $U$ (we drop $N$) such that
\begin{equation*}
UA^N\left(U\right)^T=\begin{pmatrix} \hat{d}_1& 0&\ldots &0\\
0& \hat{d}_2&\ldots &0\\
 \ldots& \ldots&\ldots &\ldots\\
 0& 0&\ldots &\hat{d}_n
\end{pmatrix},\quad \hat{d}_i\in \mathbb{C}\backslash\{0\}, \hat{d}_i\not=\hat{d}_j,\, i\not=j,\,i,j=1\ldots,N.
\end{equation*}

If we introduce the notation
\begin{equation*}
U=\begin{pmatrix} \hat{u}^1\\
\hat{u}^2\\
\ldots\\
\hat{u}^N
\end{pmatrix},\quad U^T=\left(\hat{u}^1 \,|\, \hat{u}^2 \,|\, \ldots \,|\,
\hat{u}^N\right),
\end{equation*}
that is $\hat{u}^i$ is a line in the first equality or a column in
the second, then $\hat{u}^i$ satisfies:
\begin{equation}
\label{Spectral_strange}
A^N\hat{u}^i=\hat{d}_i\overline{\hat{u}^i},\quad A^N\begin{pmatrix} \hat{u}^i_1\\
{\hat{u}}^i_2\\
\ldots\\
{\hat{u}}^i_n
\end{pmatrix}=\hat{d}_i \begin{pmatrix}\overline{\hat{u}^i_1}\\
\overline{{u}^i_2}\\
\ldots\\
\overline{\hat{u}^i_n}
\end{pmatrix}.
\end{equation}
Note that first components of all vectors is not zero:
\begin{equation*}
\hat{u}^i_1\not=0,\quad i=1,\ldots,N,
\end{equation*}
otherwise it immediately follows from (\ref{Spectral_strange})
that $\hat{u}^i=0$.

Now we introduce the vectors which we use in Fourier-type
expansion for the solution (\ref{Jacobi_dyn_int}) in the following
way:
\begin{equation*}
u^i=\begin{pmatrix} 0\\
\frac{\hat{u}^i_1}{\hat{u}^i_1}\\
\ldots\\
\frac{\hat{u}^i_N}{\hat{u}^i_1}\\
0
\end{pmatrix},\quad i=1,\ldots,N,
\end{equation*}
so we formally add two values: $u^i_0=u^i_{N+1}=0$ and normalize
vectors such that $u^i_1=1$. In this case we have (see
(\ref{Spectral_strange})
\begin{equation*}
A^N\begin{pmatrix} \frac{\hat{u}^i_1}{\hat{u}^i_1}\\
\frac{\hat{u}^i_2}{\hat{u}^i_1}\\
\ldots\\
\frac{\hat{u}^i_n}{\hat{u}^i_1}
\end{pmatrix}={d}_i \begin{pmatrix}\frac{\overline{\hat{u}^i_1}}{\overline{\hat{u}^i_1}}\\
\frac{\overline{\hat{u}^i_2}}{\overline{\hat{u}^i_1}}\\
\ldots\\
\frac{\overline{\hat{u}^i_n}}{{\overline{\hat{u}^i_1}}}
\end{pmatrix},
\end{equation*}
where we introduced the notation
\begin{equation*}
d_i:=\hat{d_i}\frac{\overline{\hat{u}^i_1}}{{\hat{u}^i_1}},\quad
i=1,\ldots,N.
\end{equation*}


Now we look for the solution to (\ref{Jacobi_dyn_int}) in the
form:
\begin{equation}
\label{Repr2}
v^f_{n,t}=\begin{cases} \sum_{k=1}^N c^k_t\overline {u^k_n},\\
f_t,\quad n=0.
\end{cases}
\end{equation}
Introducing the notations
\begin{eqnarray}
\sum_{n=1}^N \overline{u^k_n}u_n^i=\delta_{ki}\rho_i,\quad i=1,\ldots,N,\label{Rho}\\
H_{ki}=\sum_{n=1}^N \overline{u^k_n}\overline{u_n^i}, \quad
k,i=1,\ldots,N,\label{HKL}
\end{eqnarray}
We see that $c^i_t$ are determined from
\begin{equation}
\label{Repr4} c_{t+1}^i+c_{t-1}^i -\frac{d_i}{\rho_i}\sum_{k=1}^N
c^k_tH_{ki}=\frac{a_0}{\rho_i}f_t,\quad i=1,\ldots,N.
\end{equation}
We look for the solution to (\ref{Repr4}) in the form:
\begin{equation}
\label{CT_coeff}
c_t^i=\frac{a_0}{\rho_i}\sum_{l=0}^tf_lT_{t-l}^{(i)}.
\end{equation}
We introduce the notation
\begin{equation}
\label{Omega} \omega_i=\sum_{k=1}^N d_i\frac{H_{ki}}{\rho_k},\quad
i=1,\ldots,N,
\end{equation}
then $T^{(i)}_t$ satisfies
\begin{equation}
\label{T_coeff}
\begin{cases}
T_{t+1}^{(i)}+T_{t-1}^{(i)}-\omega_iT_t^{(i)}=0,\\
T_0^{(i)}=0,\,\, T_{-1}^{(i)}=-1.
\end{cases}
\end{equation}
Or, in other words, $T_t^{(i)}$ are simply the Chebyshev
polynomials of the second kind evaluated at points $\omega_i$:
$T^{(i)}_t=T_t(\omega_i)$.

We fix some positive integer $T$ and denote by $\mathcal{F}^T$ the
\emph{outer space} of the system (\ref{Jacobi_dyn_int}), the space
of controls: $\mathcal{F}^T:=\mathbb{C}^T$, $f\in \mathcal{F}^T$,
$f=(f_0,\ldots,f_{T-1})$, $f,g\in \mathcal{F}^T$,
$(f,g)_{\mathcal{F}^T}=\sum_{k=0}^{T-1} f_k\overline{g_k}$. And
let $\mathcal{F}^\infty=\left\{(f_0,f_1,\ldots)\,|\, f_i\in
\mathbb{C},\, i=0,1,\ldots  \right\}$, so $\mathcal{F}^\infty$ is
the set of complex sequences.

\begin{definition}
For $f,g\in \mathcal{F}^\infty$ we define the convolution
$c=f*g\in \mathcal{F}^\infty$ by the formula
\begin{equation*}
c_t=\sum_{s=0}^{t}f_sg_{t-s},\quad t\in \mathbb{N}\cup \{0\}.
\end{equation*}
\end{definition}

\begin{definition}
The \emph{response operator} $R^T:\mathcal{F}^T\mapsto
\mathbb{C}^T$ for the system (\ref{Jacobi_dyn}) is defined by
(\ref{Resp_op})
\end{definition}
The \emph{response vector} is the convolution kernel of the
response operator,
$r=(r_0,r_1,\ldots,r_{T-1})=(a_0,w_{1,1},w_{1,2},\ldots
w_{1,T-1})$, in \cite{MM4}  it is shown that $R$ is a convolution
operator:
\begin{equation*}
\left(R^Tf\right)_t=u^f_{1,t}=r*f_{\cdot-1}.
\end{equation*}
 By choosing the special control
 $f=\delta=(1,0,0,\ldots)$, the kernel of the response operator can be determined as
\begin{equation*}
\left(R^T\delta\right)_t=u^\delta_{1,t}=
r_{t-1},\quad t=1,2,\ldots.
\end{equation*}

Thus for special control $f=\delta$, using (\ref{Repr2}),
(\ref{CT_coeff}), (\ref{T_coeff}) one have that:
\begin{equation}
\label{Repr6} r_{t-1}^N=v^\delta_{1,t}= \sum_{k=1}^N
c^k_t\overline {u^k_1}=\sum_{k=1}^N
c^k_t=\sum_{k=1}^N\frac{1}{\rho_k}T_t(\omega_k),\quad
t=1,2,\ldots,
\end{equation}
where $\rho_k$ and $\omega_k$ are defined in (\ref{Rho}) and
(\ref{Omega}).

We introduce a discrete measure $d\rho^N$ on $\mathbb{C}$,
concentrated on the set of points
$\left\{\omega_k\right\}_{k=1}^N$, by definition we set
\begin{equation*}
d\rho^N(\{\omega_k\})=\frac{1}{\rho_k},
\end{equation*}
so that at points $\omega_k$ it has weights $\frac{1}{\rho_k}$.
Then we can rewrite (\ref{Repr6}) in a form which resembles the
spectral representation of dynamic inverse data (see \cite{MM5}).

Due to the finite speed of wave propagation in (\ref{Jacobi_dyn})
the solution $u^f$ depends on the coefficients $a_n,b_n$ as
follows:
\begin{remark}
\label{Rem1} For $M\in \mathbb{N}$, $u^f_{M-1,M}$ depends on
$\{a_0,\ldots,a_{M-1}\}$, $\{b_1,\ldots,b_{M-1}\}$, the response
$R^{2T}$ (or, what is equivalent, the response vector
$(r_0,r_1,\ldots,r_{2T-2})$) depends on $\{a_0,\ldots,a_{T-1}\}$,
$\{b_1,\ldots,b_T\}$.
\end{remark}
This, in particular means that for responses of the systems
(\ref{Jacobi_dyn}), (\ref{Jacobi_dyn_int}) the following relation
holds:
\begin{equation}
\label{resp_fin_inf} r_t=r^N_t,\quad t=1,2,\ldots, 2N-2.
\end{equation}

The following proposition is one of results from \cite{MM6}:
\begin{proposition}
The dynamic response vector of the system (\ref{Jacobi_dyn_int})
admits the following representation:
\begin{equation}
\label{Resp_vect_N}
r_{t-1}^N=\int_{\mathbb{C}}T_t(\lambda)\,d\rho^N(\lambda),\quad
t=1,2,\ldots.
\end{equation}
The dynamic response vector of the system (\ref{Jacobi_dyn})
admits the representation:
\begin{equation}
\label{Resp_vect_N1}
r_{t-1}=\int_{\mathbb{C}}T_t(\lambda)\,d\rho(\lambda),\quad
t=1,2,\ldots.
\end{equation}
\end{proposition}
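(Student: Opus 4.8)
For (\ref{Resp_vect_N}) the plan is to unwind the definition of $d\rho^N$, while for (\ref{Resp_vect_N1}) I would anchor everything to the defining resolvent representation of $d\rho$, so as to reach that specific measure rather than some limit of the $d\rho^N$. For the finite case, $d\rho^N$ places mass $1/\rho_k$ at each $\omega_k$, so $\int_{\mathbb{C}}g\,d\rho^N=\sum_{k=1}^N g(\omega_k)/\rho_k$ for any $g$; taking $g=T_t$ and comparing with (\ref{Repr6}) gives (\ref{Resp_vect_N}) at once, with no limiting procedure. For (\ref{Resp_vect_N1}) I would first recast (\ref{Jacobi_dyn}) in operator form: writing $u^f_{\cdot,t}=(u^f_{1,t},u^f_{2,t},\ldots)$ and using $a_0=1$ with $u_{0,t}=f_t$, the recurrence becomes
\begin{equation*}
u^f_{\cdot,t+1}+u^f_{\cdot,t-1}-Hu^f_{\cdot,t}=f_t e_1,\qquad u^f_{\cdot,-1}=u^f_{\cdot,0}=0,
\end{equation*}
the control entering only as a source at the first site through the term $a_0u_{0,t}=f_t$. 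Finite speed of propagation makes each $u^f_{\cdot,t}$ finitely supported, so these are genuine $l_2$ identities and $H$ acts by the matrix product.

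Specializing to $f=\delta$, the source is present only at $t=0$, and solving the recurrence inductively gives $u^\delta_{\cdot,1}=e_1$ together with $u^\delta_{\cdot,t+1}=Hu^\delta_{\cdot,t}-u^\delta_{\cdot,t-1}$ for $t\geqslant 1$. This is precisely the three-term relation (\ref{T_coeff}) read at the operator $H$, so matching the initial data $T_1(H)e_1=e_1$, $T_0(H)e_1=0$ yields $u^\delta_{\cdot,t}=T_t(H)e_1$ for all $t$. Taking first components and recalling $r_{t-1}=u^\delta_{1,t}$, I obtain the operator identity $r_{t-1}=\left(T_t(H)e_1,e_1\right)$.

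It then remains to match $\left(T_t(H)e_1,e_1\right)$ with $\int_{\mathbb{C}}T_t\,d\rho$, and here I would invoke the resolvent representation $m(\lambda)=\int_{\mathbb{C}}(z-\lambda)^{-1}d\rho(z)$ supplied by \cite{SG}, the complex counterpart of (\ref{Weyl_real}). Since $H$ is bounded and $d\rho$ is compactly supported, for all sufficiently large $|\lambda|$ both the Neumann series for $(H-\lambda)^{-1}$ and the geometric expansion of $(z-\lambda)^{-1}$ under the integral converge, giving the two Laurent expansions at infinity
\begin{equation*}
m(\lambda)=-\sum_{n=0}^\infty \lambda^{-n-1}\left(H^n e_1,e_1\right)=-\sum_{n=0}^\infty \lambda^{-n-1}\int_{\mathbb{C}}z^n\,d\rho.
\end{equation*}
Uniqueness of Laurent coefficients forces $\left(H^n e_1,e_1\right)=\int_{\mathbb{C}}z^n\,d\rho$ for every $n$; since each $T_t$ is a polynomial, linearity upgrades this to $\left(T_t(H)e_1,e_1\right)=\int_{\mathbb{C}}T_t\,d\rho$, which together with the previous step is (\ref{Resp_vect_N1}) for the specific $d\rho$.

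The main difficulty I anticipate is conceptual rather than computational: in the non-self-adjoint setting $d\rho$ need not be a genuine (let alone positive) measure, so $\int_{\mathbb{C}}T_t\,d\rho$ and the moments $\int_{\mathbb{C}}z^n\,d\rho$ must be read as the action of the linear functional furnished by \cite{SG}. The argument uses only that this functional is linear, compactly supported, and reproduces $m$ via $\int_{\mathbb{C}}(z-\lambda)^{-1}d\rho$; granting these properties, the moment matching---and hence (\ref{Resp_vect_N1}) with the already-defined $d\rho$---follows with no recourse to weak-$*$ limits.
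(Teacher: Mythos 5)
Your treatment of the finite case is correct and coincides with the paper's: $d\rho^N$ is by definition the atomic measure with weight $1/\rho_k$ at $\omega_k$, so (\ref{Resp_vect_N}) is just (\ref{Repr6}) restated, with no limit needed. Your operator identity $r_{t-1}=\left(T_t(H)e_1,e_1\right)$ is also correct: with $a_0=1$ the vectors $u^\delta_{\cdot,t}$ satisfy the three-term recurrence and the initial data of (\ref{T_coeff}) at the operator argument $H$, and finite propagation speed keeps everything finitely supported. That identity is a nice reformulation the paper does not state explicitly.

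The gap is in the semi-infinite case, precisely at the point you flagged and then granted: there is no ``resolvent representation $m(\lambda)=\int_{\mathbb{C}}(z-\lambda)^{-1}\,d\rho(z)$ supplied by \cite{SG}'' in this setting. Gesztesy--Simon \cite{SG} treats real (self-adjoint) Jacobi matrices, where $d\rho$ is the spectral measure furnished by the spectral theorem; the paper stresses in the Introduction that (\ref{Weyl_real}) is available only for real $A$, and in its closing Remark that in the complex case the measure appearing in (\ref{Resp_vect_N1}) is exactly the one constructed in Section 3 (first introduced in \cite{MM6}), not a pre-given object. Concretely, the symbol $d\rho$ in (\ref{Resp_vect_N1}) has no referent until it is constructed, and the paper constructs it inside this very proof: by finite speed of propagation, (\ref{resp_fin_inf}) gives $r_t=r^N_t$ for $t\leqslant 2N-2$, so the integrals $\int_{\mathbb{C}}T_t(\lambda)\,d\rho^N(\lambda)$ stabilize in $N$; hence the moments of $d\rho^N$ converge, $d\rho$ is \emph{defined} as the weak-$*$ limit, and (\ref{Resp_vect_N1}) holds for this $d\rho$ by passing to the limit in (\ref{Resp_vect_N}). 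Your Laurent-expansion moment matching is therefore circular as written: it derives the moments of $d\rho$ from a Cauchy-type representation whose existence (for a non-self-adjoint $H$, where $d\rho$ is at best a linear functional) is equivalent in difficulty to, and in this paper is accomplished by, the limiting construction you set out to avoid. The repair is short: keep your identity $r_{t-1}=\left(T_t(H)e_1,e_1\right)$ if you wish, but define $d\rho$ via the weak-$*$ limit of the $d\rho^N$ and deduce (\ref{Resp_vect_N1}) from (\ref{resp_fin_inf}) together with (\ref{Resp_vect_N}).
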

\begin{proof}
Formula (\ref{Resp_vect_N}) is a consequence of  (\ref{Repr6}) and
definition of $d\rho^N(\lambda)$. From (\ref{resp_fin_inf}) it
follows that $r_t=\int_{\mathbb{C}}T_t(\lambda)\,d\rho^N(\lambda)$
with any $N$ as soon as $t\leqslant 2N-2.$ Then for arbitrary
polynomial $P\in C[\lambda]$,
$P(\lambda)=\sum_{k=0}^Ma_k\lambda^k$, we have that
\begin{equation}
\label{measure_conv}
\int_{\mathbb{C}}P(\lambda)\,d\rho^N(\lambda)\longrightarrow_{N\to\infty}
\sum_{k=0}^Ma_ks_k,\quad
s_k=\int_{\mathbb{C}}\lambda^k\,d\rho^N(\lambda).
\end{equation}
Convergence (\ref{measure_conv}) means that $d\rho^N$ converges
$*-$weakly as $N\to\infty$ to certain measure $d\rho$, which
provides (\ref{Resp_vect_N1}).
\end{proof}

\section{Representation of Weyl functions. }

We need to introduce the appropriate Fourier transformation to
pass from "dynamic" problem with time $t$ to "spectral" problem
with parameter $\lambda$. Here we follow \cite{MMS}:

\subsection{Discrete Fourier transformation.}

Consider two solutions of the equation
\begin{equation}
\label{Difer_eqn0} \psi_{n-1}+\psi_{n+1}=\lambda \psi_n,
\end{equation}
satisfying the initial conditions:
\begin{equation*}
P_0=0,\,\,P_1=1,\quad Q_0=-1,\,\, Q_1=0.
\end{equation*}
Clearly, one has:
\begin{equation*}
Q_{n+1}(\lambda)=P_{n}(\lambda).
\end{equation*}
We are looking for the unique $l^2$-solution $S$:
\begin{equation*}
S_n(\lambda)=Q_n(\lambda)+m_0(\lambda)P_n(\lambda),
\end{equation*}
where $m_0$ is the Weyl function of $H_0$ (the special case of $H$
with  $a_n\equiv1$, $b_n\equiv0$).
Consider the new variable $z$ related to $\lambda$ by the
equalities
\begin{equation}
\label{Lam_z_conn} \lambda=z+\frac1z, \quad
z=\frac{\lambda-\sqrt{\lambda^2-4}}2=\frac{\lambda-i\sqrt{4-\lambda^2}}2,
\end{equation}
$z\in\mathbb D\cap\mathbb C_-$ for $\lambda\in\mathbb C_+$, where
$\mathbb{D}=\{z\,|\, |z|\leqslant 1\}$. It is known \cite{MMS}
that
$$
m_0(\lambda)=-z.
$$
Equation \eqref{Difer_eqn0} has two solutions, $z^n$ and $z^{-n}$.
Since $S\in l^2(\mathbb N)$ and $S_1(\lambda)=m_0(\lambda)=-z$, we
get
$$
S_n(\lambda)=-z^n.
$$
Let $\chi_{(a,b)}(\lambda)$ be the characteristic function of the
interval $(a,b)$. The spectral measure of the unperturbed operator
$H_0$ corresponding to (\ref{Difer_eqn0}) is
\begin{equation*}
d\rho(\lambda)=\chi_{(-2,2)}(\lambda)\frac{\sqrt{4-\lambda^2}}{2\pi}\,d\lambda.
\end{equation*}
Then the Fourier transformation $F: l^2(\mathbb{N})\mapsto
L_2((-2,2),\rho)$ acts by the rule: for $(v_n)\in
l^2(\mathbb{N})$:
\begin{equation*}
F(v)(\lambda)=V(\lambda)=\sum_{n=0}^\infty S_n(\lambda)v_n.
\end{equation*}
The inverse transform is given by:
\begin{equation*}
v_n=\int_{-2}^2V(\lambda)S_n(\lambda)\,d\rho(\lambda).
\end{equation*}

\subsection{Main result}

The following theorem is a complex analogue of the theorem from
\cite{MMS}. The strategy again consists of using the Fourier
transform to go from dynamic systems (\ref{Jacobi_dyn}),
(\ref{Jacobi_dyn_int}) to "spectral" systems with parameter
$\lambda$. The difference lies in the proof for the semi-infinite
case. In the case of real $A$ we used the convergence of the Weyl
functions corresponding to the blocks $A^N$ to Weyl function for
the semi-infinite matrix, here we do not have this option, and we
need to show directly that the corresponding solution belongs to
$l_2$.

\begin{theorem}
\label{Theor_main} If the coefficients in the semi-infinite $H$ or
finite $H^N$ Jacobi matrix operators satisfy $\sup_{n\geqslant
1}\{a_n,|b_n|\}\leqslant B$, then the Weyl functions $m,$ $m^N$
admit the representations
\begin{eqnarray}
m(\lambda)=-\sum_{t=0}^\infty z^tr_t,\label{M_inf}\\
m^N(\lambda)=-\sum_{t=0}^\infty z^tr^N_t\label{M_fin}
\end{eqnarray}
in terms of response vectors (kernels of dynamic response
operators associated to (\ref{Jacobi_dyn}),
(\ref{Jacobi_dyn_int})): $\left(r_0,r_1,r_2,\ldots\right)$,
$\left(r_0^N,r_1^n,r_2^N,\ldots\right)$,  where the variables
$\lambda$ and $z$ are related by
\begin{equation*}
\lambda=z+\frac1z, \quad
z=\frac{\lambda-\sqrt{\lambda^2-4}}2=\frac{\lambda-i\sqrt{4-\lambda^2}}2.
\end{equation*}
These representations hold for $\lambda\in D$, where $D\subset
\mathbb{C}_+$  is defined as follows. Let $R:=3B+1$, then
\begin{eqnarray}
\label{D_reg} D:=\left\{x+iy\,\Big|\, x>
\left(R+\frac{1}{R}\right)\cos{\phi},\,
y>\left(\frac{1}{R}-R\right)\sin{\phi},\,\phi\in(\pi,2\pi)\right\}.\notag
\end{eqnarray}
\end{theorem}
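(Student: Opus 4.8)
The plan is to evaluate the $z$–series on the right–hand sides of \eqref{M_inf} and \eqref{M_fin} by summing a Chebyshev generating series against the dynamic data of the Proposition, and then to recognise the sum as the kernel of the resolvent, i.e.\ as the Weyl function itself. The computational engine is the factorisation
\begin{equation*}
1-\mu z+z^{2}=-z\bigl(\mu-(z+\tfrac1z)\bigr)=-z(\mu-\lambda),
\end{equation*}
which is immediate from \eqref{Lam_z_conn}, together with the second–kind generating identity $\sum_{s\ge0}z^{s}T_{s+1}(\mu)=(1-\mu z+z^{2})^{-1}$ valid for small $|z|$; combining the two gives $\sum_{t\ge1}z^{t}T_t(\mu)=-(\mu-\lambda)^{-1}$. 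Because the response operator acts through the time–shifted convolution $r*f_{\cdot-1}$, the series that naturally carries one power of $z$ per time step is $\sum_{t\ge1}z^{t}r_{t-1}$, which I will show equals $-m$.

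For the finite system I would first record the dynamical identity $r^{N}_{t-1}=(T_t(H^{N})e_1,e_1)$, $t\ge1$: with $f=\delta$ the scheme \eqref{Jacobi_dyn_int} becomes the bulk recursion $v_{\cdot,t+1}=H^{N}v_{\cdot,t}-v_{\cdot,t-1}$ with $v_{\cdot,0}=0$ and $v_{\cdot,1}=a_0e_1$, so that $v_{\cdot,t}=a_0T_t(H^{N})e_1$ and $r^{N}_{t-1}=v^{\delta}_{1,t}$ (recall $a_0=1$). Summing the operator series and using the factorisation in the form $(1+z^{2})I-zH^{N}=-z(H^{N}-\lambda)$ yields $\sum_{t\ge1}z^{t}T_t(H^{N})=-(H^{N}-\lambda)^{-1}$; pairing with $e_1$ and comparing with \eqref{Weyl_def2} gives \eqref{M_fin}. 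Equivalently one may keep the measure of the Proposition, $\sum_{t\ge1}z^{t}r^{N}_{t-1}=\int_{\mathbb C}\bigl(\sum_{t\ge1}z^{t}T_t(\mu)\bigr)\,d\rho^{N}(\mu)=-\int_{\mathbb C}\frac{d\rho^{N}(\mu)}{\mu-\lambda}$, and identify this Cauchy transform of $d\rho^{N}=\sum_k\rho_k^{-1}\delta_{\omega_k}$ with $m^{N}$ through the spectral decomposition of the complex symmetric matrix $H^{N}$. All interchanges are trivial since the sum over $k$ is finite, and convergence is automatic because $\|H^{N}\|\le3B<R$.

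In the semi-infinite case the same identity $r_{t-1}=(T_t(H)e_1,e_1)$ holds for \eqref{Jacobi_dyn}, and the whole work goes into controlling convergence on $D$ without appealing to $m^{N}\to m$. From $T_{t+1}(H)=HT_t(H)-T_{t-1}(H)$ I would prove $\|T_t(H)\|\le C\gamma^{t}$, where $\gamma$ is the positive root of $\gamma^{2}=3B\gamma+1$; since $\|H\|\le3B$ one has $\gamma<R=3B+1$, whence $|r_{t-1}|\le C\gamma^{t}$ and the operator series $\sum_{t\ge1}z^{t}T_t(H)$ converges in norm for $|z|<1/\gamma$. The set $D$ of \eqref{D_reg} is the region in $\mathbb C_+$ cut out by the image under \eqref{Lam_z_conn} of the semicircle $\{|z|=1/R\}\cap\mathbb C_-$ — the curve $x=(R+\tfrac1R)\cos\phi$, $y=(\tfrac1R-R)\sin\phi$ — so that $\lambda\in D$ corresponds to $|z|<1/R<1/\gamma$ and lies in the resolvent set. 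Summing then gives $\sum_{t\ge1}z^{t}T_t(H)=-(H-\lambda)^{-1}$, and pairing with $e_1$ produces \eqref{M_inf}. Here $-(H-\lambda)^{-1}e_1$ is exactly the $l_2$ vector $W^{-1}\phi^{+}$ of the Green–function computation in the Lemma, so the summed "solution" genuinely belongs to $l_2$ — the step that replaces the missing convergence of the finite Weyl functions.

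I expect the main obstacle to be precisely this convergence analysis in the infinite case. Everything hinges on two estimates: the geometric bound $\|T_t(H)\|\le C\gamma^{t}$ with $\gamma<R$, so that the $z$–series has radius of convergence at least $1/R$; and the geometric fact that the ellipse with semi–axes $R\pm\tfrac1R$ bounding $D$ encloses $\sigma(H)$, which follows from $R-\tfrac1R\ge3B\ge\|H\|$ and places $D$ in the resolvent set. Once both are secured, the interchange of summation with the pairing $(\,\cdot\,,e_1)$ is routine and \eqref{M_inf}, \eqref{M_fin} follow; the finite case is then just the unconditionally convergent, finite–dimensional instance of the same argument.
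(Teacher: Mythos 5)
Your proposal is correct, and it reaches the result by a genuinely different route than the paper. The paper's proof applies the time--Fourier transform $\widehat u_n(\lambda)=\sum_{t}S_t(\lambda)u^{\delta}_{n,t}$, $S_t(\lambda)=-z^t$, to the systems \eqref{Jacobi_dyn}, \eqref{Jacobi_dyn_int}, shows that $\widehat u,\widehat v$ solve the stationary difference equations with $\widehat u_0=\widehat v_0=-1$, and then invokes the Green's-function formula \eqref{m_funk}, $m=-\phi^{+}_1/\phi^{+}_0$; the whole burden in the semi-infinite case is to show $\widehat u(\lambda)\in l_2$ for $\lambda\in D$, which is done via the crude growth bound $|u^{\delta}_{n,t}|\leqslant(3B+1)^t$. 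You instead work at the operator level: $u^{\delta}_{\cdot,t}=T_t(H)e_1$, the norm bound $\|T_t(H)\|\leqslant C\gamma^{t}$ with $\gamma=\tfrac{3B+\sqrt{9B^2+4}}{2}<R$, and the generating identity $\sum_{t\geqslant1}z^tT_t(\mu)=-(\mu-\lambda)^{-1}$, promoted to $\sum_{t\geqslant1}z^tT_t(H)=-(H-\lambda)^{-1}$ as a norm-convergent series (the telescoping computation $\bigl((1+z^2)I-zH\bigr)\sum_{t\geqslant1} z^tT_t(H)=zI$ together with $(1+z^2)I-zH=z(\lambda I-H)$). This buys several things the paper's argument does not give directly: it proves along the way that $\lambda\in\rho(H)$ for $\lambda\in D$, since the series itself exhibits the inverse; it dispenses with the Lemma giving \eqref{m_funk} and with the $l_2$-solution characterization, using only the definitions \eqref{Weyl_def1}, \eqref{Weyl_def2}; and it handles the finite and semi-infinite cases by one argument, the finite case being the trivially convergent instance. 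The paper's route, on the other hand, produces the transformed solution $\widehat u_n(\lambda)$ for every $n$ (not just the $(1,1)$ resolvent entry), which is what connects to the boundary-control machinery of Section 3. One point you should make explicit: what you actually prove is $m(\lambda)=-\sum_{t\geqslant1}z^{t}r_{t-1}=-z\sum_{t\geqslant0}z^{t}r_{t}$, which differs from the displayed formulas \eqref{M_inf}, \eqref{M_fin} by one factor of $z$. Your indexing is the consistent one: with the paper's normalization $r_{t-1}=u^{\delta}_{1,t}$, the free operator ($a_n\equiv1$, $b_n\equiv0$, $a_0=1$) has $r=(1,0,0,\ldots)$ and $m_0(\lambda)=-z$, which your formula reproduces, while \eqref{M_inf} as printed would give $-1$. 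The same shift is already present in the paper's own passage from $\widehat v_1(\lambda)=\sum_{t}S_t(\lambda)v^{\delta}_{1,t}$ to \eqref{MN_repr}, so this is a defect of the statement as printed, not a gap in your argument.
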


\begin{proof}[ Proof of Theorem 1.]
In (\ref{Jacobi_dyn}), (\ref{Jacobi_dyn_int}) we take the special
controls: $f=\delta$, $g=\delta$ and go over the Fourier transform
in the variable $t$: for a fixed $n$ we evaluate the sum using the
conditions at $t=0$:
\begin{equation*}
\sum_{t=0}^\infty
S_t(\lambda)\left[u^\delta_{n,t+1}+u^\delta_{n,t-1}-a_nu^\delta_{n+1,t}-a_{n-1}u^\delta_{n-1,t}-b_nu^\delta_{n,t}\right]=0.
\end{equation*}
Changing the order of summation yields:
\begin{equation}
\label{Sum} \sum_{t=0}^\infty
u^\delta_{n,t}\left[S_{t-1}(\lambda)+S_{t+1}(\lambda)\right]-
S_t(\lambda)\left[a_nu^\delta_{n+1,t}+a_{n-1}u^\delta_{n-1,t}+b_nu^\delta_{n,t}\right]=0.
\end{equation}
Introducing the notation
\begin{equation}
\label{U_hat} \widehat u_n(\lambda):=\sum_{t=0}^\infty
S_t(\lambda)u^\delta_{n,t}
\end{equation}
and assuming that $S$ satisfies (\ref{Difer_eqn0}), we deduce from
(\ref{Sum}) that $\widehat u$ satisfies
\begin{equation}
\label{Syst_lambda_infinit} \left\{
\begin{array}l
a_{n-1}\widehat u_{n-1}(\lambda)+a_n\widehat
u_{n+1}(\lambda)+b_n\widehat
u_{n}(\lambda)=\lambda\widehat u_n(\lambda),\\
\widehat u_0(\lambda)=-1.
\end{array}
\right.
\end{equation}
Similarly, introducing
\begin{equation}
\label{V_hat} \widehat v_n(\lambda):=\sum_{t=0}^\infty
S_t(\lambda)v^{\delta}_{n,t}
\end{equation}
one can check that $\widehat v$ satisfies
\begin{equation}
\label{Syst_lambda_finit} \left\{
\begin{array}l
a_n\widehat v_{n+1}(\lambda)+a_{n-1}\widehat
v_{n-1}(\lambda)+b_n\widehat
v_{n}(\lambda)=\lambda\widehat v_n(\lambda),\\
\widehat v_0(\lambda)=-1, \,\, \widehat v_{N+1}=0.
\end{array}
\right.
\end{equation}
Then by (\ref{m_funk}) we immediately obtain the representation
for the Weyl function in the finite case:
\begin{equation}
\label{MN_repr} m^N(\lambda)=\widehat
v_1(\lambda)=\sum_{t=0}^\infty S_t(\lambda)r^N_t.
\end{equation}

Passing from dynamical systems with discrete time
(\ref{Jacobi_dyn}), (\ref{Jacobi_dyn_int}) to systems
(\ref{Syst_lambda_infinit}), (\ref{Syst_lambda_finit}) with  the
parameter $\lambda$ will be justified as soon as we show that sums
in (\ref{U_hat}) and (\ref{V_hat}) converge. This is shown in
\cite{MMS} for real matrices, this method without any changes
works here. Instead we show that the solution $\widehat
u_n(\lambda)\in l_2$ in some region. To this end we need estimates
on $|u^\delta_{n,t}|$. Introducing the notation
\begin{equation*}
M_t:={\rm max}_{1\leqslant n\leqslant
t}\left\{|u^\delta_{n,t}|,|u^\delta_{n,t-1}|\right\},
\end{equation*}
from the difference equation (the first line) of the system
(\ref{Jacobi_dyn}) we have the following estimate:
\begin{equation*}
|u^\delta_{n,t+1}|\leqslant (3B+1)M_t.
\end{equation*}
From this relation we obtain that
\begin{equation*}
M_{t+1}\leqslant (3B+1)M_t,\quad M_0=1.
\end{equation*}
Thus $M_t$ is bounded by the following:
\begin{equation*}
M_{t}\leqslant (3B+1)^t.
\end{equation*}
Then we note that since $u^\delta_{n,t}=0$ as soon as $t<n$, the
sum in (\ref{U_hat}) has the form
\begin{equation}
\label{U_hat1} \widehat u_n(\lambda):=\sum_{t=n}^\infty
S_t(\lambda)u^\delta_{n,t}
\end{equation}
Then we can estimate:
\begin{equation}
\label{U_hat11} |\widehat u_n(\lambda)|\leqslant |z(\lambda)|^n
M_n\sum_{t=0}^\infty |z(\lambda)|^t M_t.
\end{equation}
The sum in the right hand side of (\ref{U_hat1}) converges,
provided
\begin{equation*}
\left|z\left(\lambda\right)\right|^t(3B+1)^t<1,
\end{equation*}
or
\begin{equation}
\label{xi_est} \left|z(\lambda)\right|<\frac{1}{3B+1}.
\end{equation}
The estimate (\ref{xi_est}) is valid in the region $D\subset
\mathbb{C}_+$ specified in (\ref{D_reg}). We can then introduce
the notation $K(\lambda)=\sum_{t=0}^\infty |z(\lambda)|^t M_t,$
and get from (\ref{U_hat11}) that
\begin{equation*}
|\widehat u_n(\lambda)|\leqslant |z(\lambda)|^n
M_nK(\lambda),\quad \text{for} \quad \lambda\in D.
\end{equation*}
Then we immediately get that the $l_2-$norm of $\widehat
u(\lambda)$ can be estimated
\begin{equation}
\label{U_hat3} \sum_{n=0}^\infty|\widehat
u_n(\lambda)|^2\leqslant\ K(\lambda) \sum_{n=0}^\infty
|z(\lambda)|^{2n} M^2_n.
\end{equation}
We are left to note that the sum in the right hand side of
(\ref{U_hat3}) converges for $\lambda\in D$. The latter implies
that the solution (\ref{U_hat}) to (\ref{Syst_lambda_infinit}) is
from $l_2$ for $\lambda\in D$, and as such
\begin{equation*}
m(\lambda)=\widehat u_1(\lambda)=\sum_{t=0}^\infty
S_t(\lambda)r_t.
\end{equation*}
which completes the proof.

\end{proof}

\begin{remark}
The standard formula for the Weyl function in the real-valued case
(\ref{Weyl_real}) uses the spectral measure of $H$. In the
complex-valued case the obtained representation for the Weyl
function (\ref{M_inf}), (\ref{Resp_vect_N1}) in terms of some
measure constructed in the third Section. Thus measure was first
introduced in \cite{MM6} where it was used to solve the complex
moment problem. A substantive study of this measure and the
associated polynomials will be the subject of forthcoming
publications.
\end{remark}

\end{document}